\documentclass[a4paper]{article}


\usepackage{amsmath,amssymb,amsthm}
\usepackage[titletoc,toc]{appendix}


\usepackage{vmargin}
\setmarginsrb{3cm}{1.5cm} 
                      {2cm}{2cm}              
                      {10pt}{10pt}       
                      {12pt}{30pt}       

\usepackage{hyperref}
\usepackage{algorithm}
\usepackage{algorithmic}
\usepackage[dvips]{graphicx}
\usepackage{multicol}
\usepackage{float}
\usepackage{listings}
\usepackage{subfigure}
\usepackage{enumerate}

\usepackage[centerlast, small]{caption}

\usepackage{fancyhdr} 



\def\eps{\varepsilon}



\newtheorem{theorem}{Theorem}

\newtheorem{lemma}{Lemma}
\theoremstyle{definition}



\def\Oh{\mathcal{O}}

\linespread{1.2}

\setlength{\parskip}{2ex plus 0.5ex minus 0.2ex}


\begin{document}

\title{Cholesky factorisation of linear systems coming from finite
  difference approximations of singularly perturbed problems}
\author{Th\'{a}i Anh Nhan\thanks{This author's work is supported by the Irish Research
Council under Grant No. RS/2011/179.}~ and Niall Madden\\
School of Mathematics,
Statistics and Applied Mathematics,\\ National University of Ireland,
Galway.\\
\small{Emails:  a.nhan1@nuigalway.ie,Niall.Madden@nuigalway.ie}}

 \maketitle
\begin{abstract}
We consider the solution of large linear systems of
  equations that arise when two-dimensional
singularly perturbed reaction-diffusion equations are discretized.
Standard methods for these problems, such as central finite
differences, lead to system matrices that are positive definite. The
direct solvers of choice for such systems are based on Cholesky
factorisation. However, as  observed in \cite{MaMa13}, these solvers
may exhibit  poor performance for singularly perturbed problems. We
provide an analysis of the distribution of entries in the factors
based on their magnitude that explains this phenomenon, and give
bounds on the ranges of the perturbation and discretization
parameters where poor performance is to be expected.
\end{abstract}
\noindent\textbf{Keywords:} Cholesky factorization, Shishkin mesh,
singularly perturbed.
\section{Introduction}\label{sec:intro}

We consider the singularly perturbed two dimensional
reaction-diffusion problem:
\begin{equation}\label{eq:2DRD}
-\varepsilon^2 \Delta u + b(x,y)u = f(x,y), \quad \Omega=(0,1)^2,
\quad
  u(\partial\Omega)=g(x,y),
\end{equation}
where the  ``perturbation parameter'', $\varepsilon$, is a small and
positive, and the functions $g$, $b$ and $f$ are given, with
$b(x,y)\ge \beta^2>0$.

We are interested in the numerical solution of \eqref{eq:2DRD} by
the following standard finite difference technique. Denote the mesh
points of an arbitrary rectangular mesh as $(x_i, y_j)$ for $i,j \in
\{0,1, \dots, N\}$, write the local mesh widths as $h_i=x_i-x_{i-1}$
and $k_j=y_j-y_{j-1}$, and let $\bar{h}_i = (x_{i+1} - x_{i-1})/2$,
and $\bar{k}_j = (y_{j+1} - y_{j-1})/2$. Then the linear system for
the finite difference method can be written as
\begin{subequations}
\label{eq:finite diff}
\begin{equation}\label{eq:lin_sys_2DRD}
AU^N=f^N_{}, \quad \text{ where } \quad
A=\left(-\varepsilon^2\Delta^N +
\bar{h}_i\bar{k}_jb(x_i,y_j)\right),
\end{equation}
and $\Delta^N_{}$ is the  symmetrised 5-point second order central
difference operator
\begin{equation}\label{eq:operator}
\Delta^N_{}:=\left(
\begin{matrix}
&\dfrac{\bar{h}_i}{k_{j+1}}&\\
\dfrac{\bar{k}_j}{h_{i}} &
-\left(\bar{k}_j\left(\dfrac{1}{h_i}+\dfrac{1}{h_{i+1}}\right)
+ \bar{h}_i\left(\dfrac{1}{k_j}+\dfrac{1}{k_{j+1}}\right)\right) &\dfrac{\bar{k}_j}{h_{i+1}}\\
&\dfrac{\bar{h}_i}{k_{j}}&
\end{matrix}
\right).
\end{equation}
\end{subequations}
It is known that the scheme (\ref{eq:finite diff})  applied to
(\ref{eq:2DRD}) on a boundary layer-adapted mesh with $N$ intervals
in each direction yields a parameter robust approximation, see,
e.g., \cite{ClGr05a,Linss10}.
Since $A$ in  \eqref{eq:lin_sys_2DRD} is a banded, symmetric and
positive definite, the direct solvers of choice are variants on
Cholesky factorisation. This is based on the idea that there exists
a unique lower-triangular matrix $L$ (the ``Cholesky factor'') such
that $A=LL^T$ (see, e.g.,~\cite[Thm. 4.25]{Golub_96}). Conventional
wisdom is that the computational complexity of these methods depends
exclusively on $N$ and the structure of the matrix (i.e., its
sparsity pattern). However, MacLachlan and Madden  \cite[\S
4.1]{MaMa13} observe that standard implementations of Cholesky
factorisation
 applied to  (\ref{eq:lin_sys_2DRD}) perform poorly when  $\eps$
in \eqref{eq:2DRD} is small. Their explanation is that the Cholesky
factor, $L$, contains many small entries that fall in to the range
of \emph{subnormal  floating-point
  numbers}. These are  numbers that have
 magnitude  (in exact arithmetic)
between $2^{-1074} \approx 5 \times 10^{-324}$  and $2^{-1022}
\approx 2\times 10^{-308}$
 (called  \texttt{realmin} in MATLAB). Numbers greater than
$2.2\times 10^{-308}$  are represented  faithfully in IEEE
 standard double precision, while numbers less than  $2^{-1074}$ are
 flushed to zero (we'll call such numbers
 ``\emph{underflow-zeros}''). Floats between these values do not have
 full
 precision, but allow for  ``gradual underflow'', which (ostensibly)
 leads to more reliable
 computing (see, e.g., \cite[Chap. 7]{Overton01}). Unlike standard
 floating-point numbers, most CPUs do not support operations on
 subnormals directly, but rely on microcode implementations, which are
 far less efficient. Thus it is to be expected that it is more
 time-consuming to factorise $A$ in \eqref{eq:lin_sys_2DRD} when
 $\eps$ is small.
As an example of this,  consider \eqref{eq:finite diff} where
$N=128$ and the mesh is uniform. The nonzero entries of the
associated Cholesky factor are located on the diagonals that are at
most a distance $N$ from main diagonal. In \autoref{fig:diag_L}, we
plot the absolute value of largest entry of  a given diagonal of
$L$, as a function of its distance from the main diagonal. On the
left of \autoref{fig:diag_L}, where  $\eps=1$, we observe that the
magnitude of the largest entry gradually decays away from the
location of the nonzero entries of $A$.  In contrast, when
$\eps=10^{-6}$ (on the right), magnitude of the largest entry decays
exponentially.

To demonstrate the effect of this on computational efficiency, in
\autoref{tab:CholUniform512} we show the time, in seconds,  taken to
compute the factorisation of  $A$ in (\ref{eq:lin_sys_2DRD}) with a
uniform mesh, and $N=512$, on a single core of  AMD Opteron 2427,
2200 MHz processor, using
 CHOLMOD~\cite{YChen_etal_2008a} with ``natural order'' (i.e., without
 a  fill reducing ordering).
Observe that the time-to-factorisation increases from 52 seconds
when $\eps$ is large, to nearly 500 seconds when $\eps=10^{-3}$,
when over 1\% of the entries are in the subnormal range. When $\eps$
is smaller again, the number of nonzero entries in $L$ is further
reduced, and so the execution time decreases as well.

\begin{figure}[htb]
\begin{center}
\includegraphics[width=4.75cm]{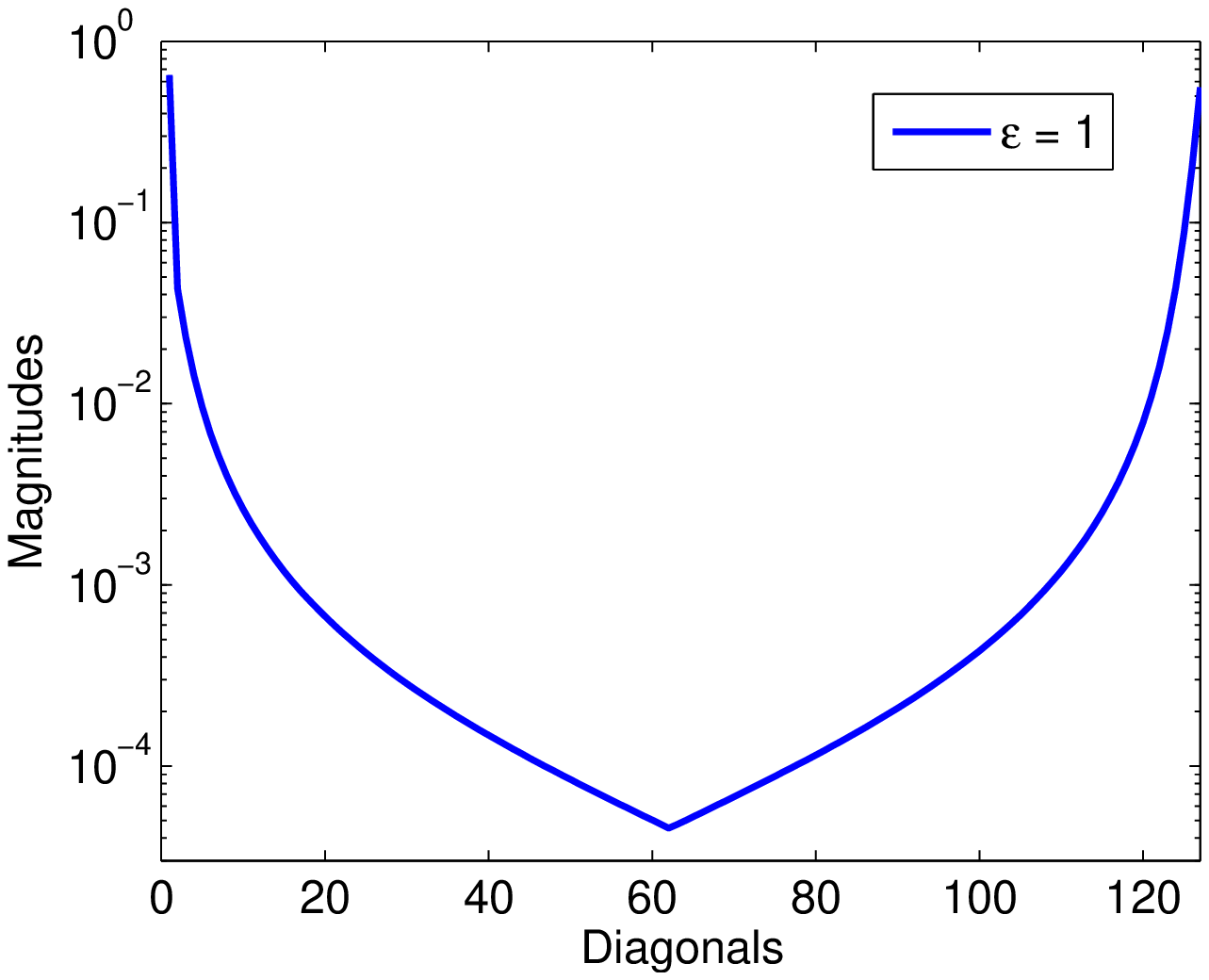}
\hspace{.5cm}
\includegraphics[width=4.75cm]{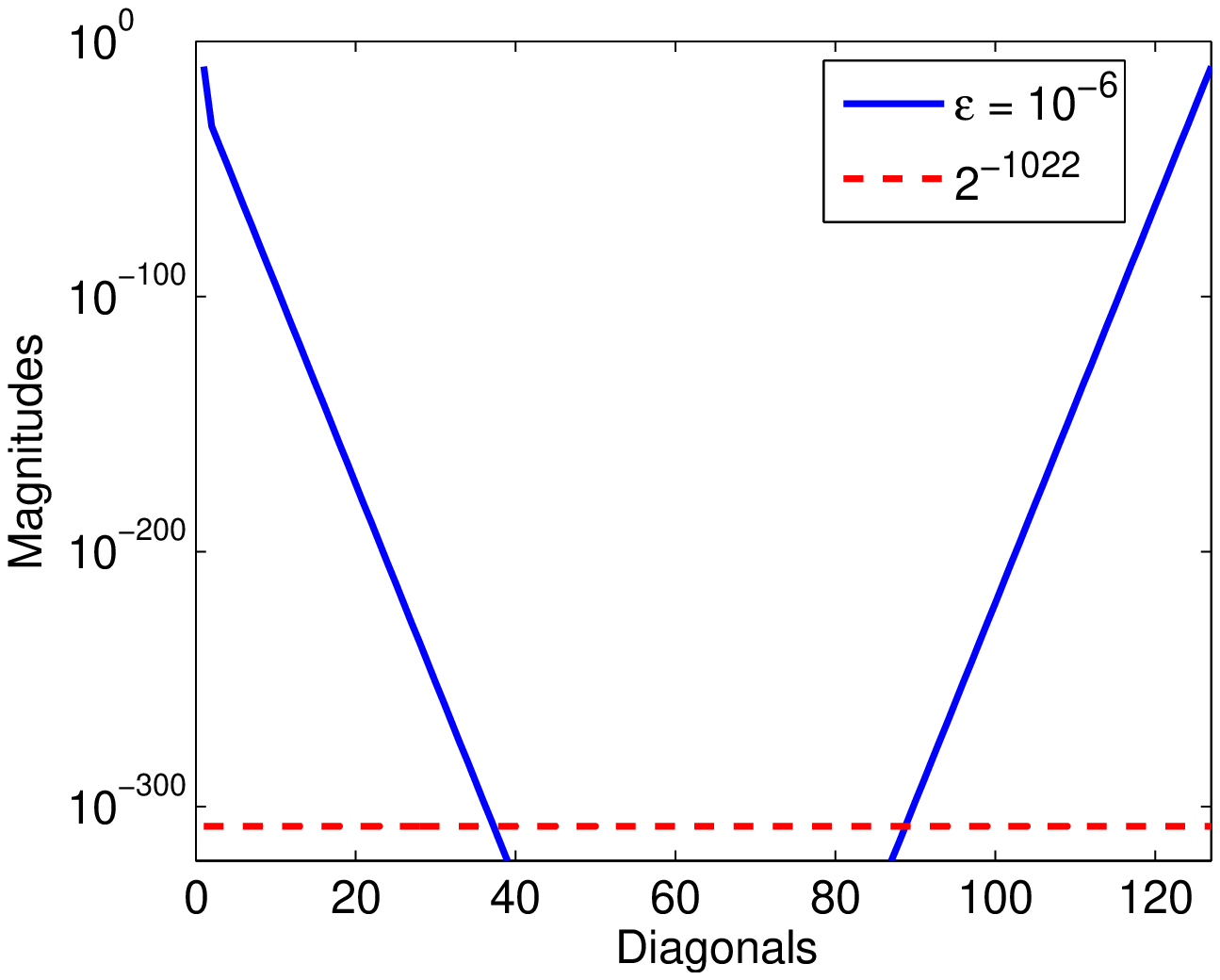}
\caption{Semi-log plot of maximal  entries on diagonals of $L$ with
$N=128$, and $\eps=1$ (left) and $\eps=10^{-6}$ (right).}
\label{fig:diag_L}
\end{center}
\end{figure}


\begin{table}[htb]
\centering
\begin{tabular}{c||r|r|r|r|r|r}
\multicolumn{1}{c|}{$\eps$} &
  \multicolumn{1}{|c|}{$10^{-1}$} &
  \multicolumn{1}{|c|}{$10^{-2}$} &
  \multicolumn{1}{|c|}{$10^{-3}$} &
  \multicolumn{1}{|c|}{$10^{-4}$} &
  \multicolumn{1}{|c|}{$10^{-5}$} &
  \multicolumn{1}{|c}{$10^{-6}$} \\\hline
Time (s)  &         52.587 &       52.633 &      496.887 &      175.783 &       74.547 &       45.773 \\
Nonzeros in $L$  & 133,433,341 &133,433,341& 128,986,606 & 56,259,631&33,346,351 & 23,632,381\\
Subnormals in $L$ &                  0 &          0&    1,873,840 &
2,399,040 &    1,360,170 &     948,600\\
Underflow zeros &                  0 &           0&  4,446,735 &
77,173,710 & 100,086,990 &
 109,800,960\\
\end{tabular}
\caption{Time taken (in seconds) to compute the Cholesky factor,
$L$,
  of $A$ in (\ref{eq:finite diff}) on a uniform mesh with
  $N=512$. The number of nonzeros, subnormals, and underflow-zeros in $L$ are~also~shown.}\label{tab:CholUniform512}
\end{table}


Our goal is to give an analysis that fully explains the observations
of \autoref{fig:diag_L} and \autoref{tab:CholUniform512}, and that
can also be exploited in other solver strategies. We derive
expressions, in terms of $N$ and $\eps$, for the magnitude of
entries of  $L$ as determined by their location. Ultimately, we are
interested in the analysis of systems that arise from the numerical
solution of  \eqref{eq:2DRD} on appropriate boundary layer-adapted
meshes. Away from the boundary, such meshes are usually uniform.
Therefore, we begin in Section~\ref{sec:mag_entries} with studying a
uniform mesh discretisation, in the setting of exact arithmetic,
which provides mathematical justification  for observations in
\autoref{fig:diag_L}. In Section~\ref{sec:distribution_fill_in},
this analysis is used to quantify to number of  entries in the
Cholesky factors of a given magnitude. As an application of this, we
show how to determine the number of subnormal numbers that will
occur in $L$ in a floating-point setting, and also determine an
lower  bound for $\eps$ for which the factors are free of subnormal
numbers. Finally, the Cholesky factorisation on a boundary
layer-adapted mesh is discussed in Section
\ref{sec:Cholesky_Shishkin}, and our conclusions are summarised in
Section \ref{sec:conclusion}.

\section{Cholesky factorisation on a uniform mesh}\label{sec:Cholesky}
\subsection{The magnitude of the fill-in entries}\label{sec:mag_entries}
We consider the discretisation~\eqref{eq:operator} of the model
problem \eqref{eq:2DRD} on a uniform mesh with $N$ intervals on each
direction. The equally spaced stepsize is denoted by $h=N^{-1}$.
When $\eps\ll h$, which is typical in a singularly perturbed regime,
the system matrix in \eqref{eq:lin_sys_2DRD} can be written as the
following 5-point stencil
\begin{equation}\label{eq:matrix_uniform}
A=\begin{pmatrix} &-\eps^2&\\
-\eps^2\ &4\eps^2+h^2b(x_i,y_j)\ &-\eps^2\\
&-\eps^2&
\end{pmatrix}=\begin{pmatrix} &-\eps^2&\\
-\eps^2\ &\Oh{(h^2)}\ &-\eps^2\\
&-\eps^2&
\end{pmatrix},
\end{equation}
since $(4\eps^2+h^2b(x_i,y_j))=\Oh(h^2)$, where we write
$f(\cdot)=\Oh{(g(\cdot))}$ if there exist positive constants $C_0$
and $C_1$, independent of $N$ and $\eps$,  such that
$C_0|g(\cdot)|\le~f(\cdot)\le~C_1|g(\cdot)|$.

Algorithm~\ref{al:Gaxpy_Cholesky_2} presents a version of Cholesky
factorisation which  adapted from~\cite[page 143]{Golub_96}. It
computes a lower triangular matrix $L$ such that $A=LL^T$. We will
follow MATLAB notation by denoting $A=[a(i,j)]$ and $L=[l(i,j)]$.
\begin{algorithm}
\caption{Cholesky factorisation:}\label{al:Gaxpy_Cholesky_2}
\textsf{
\textbf{for} $j = 1:n$\\
\phantom{class}\textbf{if} $j=1$\\
\phantom{classclassc}\textbf{for} $i=j:n$\\
\phantom{classclassclass}$l(i, j) = \dfrac{a(i,j)}{\sqrt{a(j,j)}}$\\
\phantom{classclass}\textbf{end}\\
\phantom{class}\textbf{elseif} $(j>1)$\\
\phantom{classclassc}\textbf{for} $i=j:n$\\
\phantom{classclassclass}$l(i,j) = \dfrac{a(i,j)-\sum_{k=1}^{j-1} l(i,k)l(j,k)}{\sqrt{a(j,j)}}$\\
\phantom{classclass}\textbf{end}\\
\phantom{class}\textbf{end}\\
\textbf{end}}
\end{algorithm}

We set $m=N-1$, so  $A$ is a sparse, banded $m^2 \times m^2$ matrix,
with a bandwidth of $m$, and has  no more than five nonzero entries
per row. Its factor, $L$, is far less sparse: although it has the
same bandwidth as $A$,  it has  $\Oh(m)$ nonzeros per row (see,
e.g., \cite[Prop. 2.4]{Demmel97}). The set of non-zero entries in
$L$ that are zero in the corresponding location in $A$ is called the
\emph{fill-in}. We want to find a recursive way to express the
magnitude of these fill-in entries, in terms of $\eps$ and $h$.

To analyse the magnitude of the fill-in entries, we borrow notation
from  \cite[Sec. 10.3.3]{Saad_03}, and
 form
distinct sets denoted $L^{[0]}$, $L^{[1]}, \dots, L^{[m]}$ where all
entries of $L$ of the same magnitude (in a sense explained carefully
below) belong to the same set. We denote by $l^{[k]}$ the magnitude
of entries in $L^{[k]}$, i.e., $l(i,j)\in L^{[k]}$ if and only if
$l(i,j)$ is $\Oh(l^{[k]})$. We shall see that these sets are quite
distinct, meaning that $l^{[k]}\gg l^{[k+1]}$ for $k\ge 1$.
$L^{[0]}$ is used to denote the set of nonzero entries in $A$, and
entries of $L$ that are zero (in exact arithmetic) are defined to
belong to $L^{[\infty]}$.

In Algorithm \ref{al:Gaxpy_Cholesky_2}, all the entries of $L$ are
initialised as zero, and so belong to $L^{[\infty]}$. Suppose that
$p_{i,j}$ is such that $l(i,j)\in L^{[p_{i,j}]}$, so, initially,
each $p_{i,j}=\infty$. At each sweep through the algorithm, a new
value of $l(i,j)$ is computed, and so $p_{i,j}$ is modified. From
line 8 in Algorithm~\ref{al:Gaxpy_Cholesky_2}, we can see that the
$p_{i,j}$ is updated by
\begin{equation*}\label{eq:def_level}
p_{i,j}^{}=\begin{cases}\min\{0, p_{i,1}^{}+p_{j,1}^{}+1,
p_{i,2}^{}+p_{j,2}^{}+1,
\ldots, p_{i,j-1}^{}+p_{j,j-1}^{}+1 \},\ \textrm{if }\  a(i,j)\neq0,\\
\min\{p_{i,1}^{}+p_{j,1}^{}+1, p_{i,2}^{}+p_{j,2}^{}+1, \ldots,
p_{i,j-1}^{}+p_{j,j-1}^{}+1 \},\ \textrm{otherwise}.
\end{cases}
\end{equation*}
Then, as we shall explain in detail below, it can be determined that
$L$ has a block structure shown in
\eqref{fig:L_structure}--\eqref{fig:form_L_partition_2}, where, for
brevity,  the entries belonging to $L^{[k]}$ are denoted by $[k]$,
and the entries that corresponding to nonzero entries of original
matrix are written in terms of their magnitude:
\begin{small}
\begin{subequations}
\label{eq:partition L}
\begin{equation}\label{fig:L_structure}
L=\begin{pmatrix} M&&&&\\
P&Q&&&\\
&P&Q&&\\
&&\ddots&\ddots&\\
&&&P&Q\\
\end{pmatrix},
\text{ where }
M=\begin{pmatrix}
\Oh{(h)}&&&&\\
\Oh{(\eps^2/h)}&\Oh{(h)}&&&\\
&\Oh{(\eps^2/h)}&\Oh{(h)}&&\\
&&\ddots&\ddots&\\
&&&\Oh{(\eps^2/h)}&\Oh{(h)}\\
\end{pmatrix},
\end{equation}
\begin{equation}\label{fig:form_L_partition_1}
P=\begin{pmatrix}
\Oh{(\eps^2/h)}&[1]&[2]&[3]&\ldots&[m-2]&[m-1]\\
&\Oh{(\eps^2/h)}&[1]&[2]&\ldots&[m-3]&[m-2]\\
&&\ddots&\ddots&\ddots&\vdots&\vdots\\
&&&\Oh{(\eps^2/h)}&[1]&[2]&[3]\\
&&&&\Oh{(\eps^2/h)}&[1]&[2]\\
&&&&&\Oh{(\eps^2/h)}&[1]\\
&&&&&&\Oh{(\eps^2/h)}\\
\end{pmatrix},
\end{equation}
\begin{equation}\label{fig:form_L_partition_2}
Q=\begin{pmatrix}\Oh{(h)}&&&&&&\\
\Oh{(\eps^2/h)}&\Oh{(h)}&&&&&\\
[3]&\Oh{(\eps^2/h)}&\Oh{(h)}&&&&\\
[4]&[3]&\Oh{(\eps^2/h)}&\Oh{(h)}&&&\\
\vdots&\vdots&\ddots&\ddots&\ddots&&\\
[m-1]&[m-2]&\ldots&[3]&\Oh{(\eps^2/h)}&\Oh{(h)}&\\
[m]&[m-1]&\ldots&[4]&[3]&\Oh{(\eps^2/h)}&\Oh{(h)}\\
\end{pmatrix}.
\end{equation}
\end{subequations}
\end{small}%
We now explain why the entries of $L$, which are computed by column,
have the structure shown in \eqref{eq:partition L}. According to
Algorithm \ref{al:Gaxpy_Cholesky_2}, the first column of $L$ is
computed by $l(i,1)=a(i,1)/\sqrt{a(1,1)}$, which shows that there is
no fill-in entry in this column. For the second column, the only
fill-in entry is
\[
l(m+1,2)=\frac{a(m+1,2)-l(m+1,1)l(2,1)}{\sqrt{a(2,2)}}=
\frac{0-\Oh{(\eps^2/h)}\Oh{(\eps^2/h)}}{\Oh{(h)}}=\Oh{(\eps^4/h^3)},
\]
where $l(m+1,1)$ and $l(2,1)$ belong to $L^{[0]}$, so $l(m+1,2)$ is
in $L^{[1]}$. Similarly, there are two fill-ins in third column:
$l(m+1,3)$ and $l(m+2,3)$.  The entry $l(m+1,3)$ is computed as
\[
\begin{split}
l(m+1,3)&=\frac{a(m+1,3)-\sum_{k=1}^{2}l(m+1,k)l(3,k)}{\sqrt{a(3,3)}}
=\frac{-l(m+1,2)l(3,2)}{\sqrt{a(3,3)}}
\end{split}
\]
which is $\Oh{(\eps^6/h^5)}$; moreover, since $l(m+1,2) \in
L^{[1]}$, and $l(3,2) \in L^{[0]}$, so $l(m+1,3) \in L^{[2]}$.
Similarly, it is easy to see that $l(m+2,3) \in L^{[1]}$. We may now
proceed by induction to show that
$l(m+1,j+1)=\Oh{(\eps^{2(j+1)}/h^{(2j+1)})}$ belongs to $L^{[j]}$,
for $1\le j\le m-2$. Suppose
$l(m+1,j)=\Oh{(\eps^{(2j)}/h^{(2j-1)})}\in L^{[j-1]}$. Then
\[
\begin{split}
l(m+1,j+1)&=\frac{a(m+1,j+1)-\sum_{k=1}^{j}l(m+1,k)l(j+1,k)}{\sqrt{a(j,j)}}\\
&=\frac{-l(m+1,j)l(j+1,j)}{\sqrt{a(j,j)}}, \quad\textrm{since } l(j+1,k)=0,\quad \forall k\le j-1,\\
&=\frac{\Oh{(\eps^{(2j)}/h^{(2j-1)})}\Oh{(\eps^2/h)}}{\Oh{(h)}}=\Oh{(\eps^{(2j+2)}/h^{(2j+1)})}.\\
\end{split}
\]
And, because $l(j+1,j)\in L^{[0]}$, we can deduce that
$l(m+1,j+1)\in L^{[j]}$. The process is repeated from column 1 to
column $m$, yielding the  pattern for $P$ shown in
\eqref{fig:form_L_partition_1}.

A similar process is used to show that $Q$ is as given in
\eqref{fig:form_L_partition_2}. Its first fill-in entry is
$l(m+3,m+1)$.
Note that $a(m+3,m+1)=l(m+1,1)=l(m+1,2)=0$, that the magnitude of
the entry in $L^{[j]}$ is $\Oh{(\eps^{2(j+1)}/h^{(2j+1)})}$, and
that the sum of two entries of the different magnitude  has the same
magnitude as larger one. Then
\[
\begin{split}
l(m+3,m+1)&=\frac{-\sum_{k=3}^{m}l(m+3,k)l(m+1,k)}{\sqrt{a(m+1,m+1)}}\\
&=\left[\Oh{\left(\frac{\eps^2}{h}\right)}\Oh{\left(\frac{\eps^6}{h^5}\right)}+\Oh{\left(\frac{\eps^4}{h^3}\right)}\Oh{\left(\frac{\eps^8}{h^7}\right)}+\ldots\right.\\
&\qquad\left.\,\,+\Oh{\left(\frac{\eps^{2(m-2)}}{h^{(2(m-3)+1)}}\right)}
\Oh{\left(\frac{\eps^{2(m)}}{h^{(2(m-1)+1)}}\right)}\right]\frac{1}{\Oh{(h)}}\\
&=\left[\Oh{\left(\frac{\eps^2}{h}\right)}\Oh{\left(\frac{\eps^6}{h^5}\right)}\right]\frac{1}{\Oh{(h)}}=\Oh\left(\frac{\eps^8}{h^7}\right),
\end{split}
\]
and so $l(m+3,m+1)$ belongs to $L^{[3]}$. Proceeding inductively, as
was done for $P$ shows that $Q$ has the form given in
(\ref{fig:form_L_partition_2}). Furthermore, the same process
applies to each block of $L$ in  \eqref{fig:L_structure}.
 Summarizing, we have established the following result.
\begin{theorem}\label{thm:mag_entries}
The fill-in entries of the Cholesky factor $L$ of the matrix $A$
defined in~(\ref{eq:matrix_uniform}) is as given in
(\ref{eq:partition L}). Moreover, setting $\delta=\eps/h$, the
magnitude $l^{[k]}$ is
\begin{equation}\label{eq:entries_Cholesky_l_2}
l^{[k]}=\Oh{\left(\eps^{2(k+1)}/h^{(2k+1)}\right)}=\Oh{\left(\delta^{2(k+1)}_{}h\right)}
\quad \text{ for } \quad k=1,2, \dots, m.
\end{equation}
\end{theorem}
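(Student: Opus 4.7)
My plan is to prove the theorem by induction on the column index of the Cholesky algorithm, reading off the magnitudes column by column as Algorithm~\ref{al:Gaxpy_Cholesky_2} proceeds. Since the block structure in (\ref{eq:partition L}) has essentially been constructed by the informal derivation preceding the statement, the task is to verify that this construction is internally consistent, and to consolidate the recursive magnitude estimate $l^{[k]}=\Oh(\eps^{2(k+1)}/h^{(2k+1)})$.

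First I would dispatch the base cases. Column~1 has no fill-in because $l(i,1)=a(i,1)/\sqrt{a(1,1)}$ only writes into positions where $a(i,1)\ne 0$, which gives the nonzeros of $M$. The first genuine fill-in occurs at $l(m+1,2)$, and a direct computation using $a(m+1,2)=0$ yields $\Oh(\eps^4/h^3)$, establishing the claim for $k=1$. Next, for the block $P$, I would induct on the column index $j$: assume $l(m+1,j)=\Oh(\eps^{2j}/h^{2j-1})\in L^{[j-1]}$, and then invoke the fact that $l(j+1,k)=0$ for all $k\le j-1$ (since row $j+1$ of $L$ currently has only its diagonal contribution populated). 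This collapses the sum in line~8 of the algorithm to the single product $l(m+1,j)\,l(j+1,j)/\sqrt{a(j+1,j+1)}$, immediately advancing the magnitude to the next level. Propagating this across the rows of $P$ gives the staircase pattern in~(\ref{fig:form_L_partition_1}).

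The block $Q$ is handled similarly, but the sum defining the fill-in now contains several nonzero terms, so the crux is to show that it is dominated by a single product. For $l(m+3,m+1)$ I would expand the sum explicitly and observe that each successive summand pairs entries from deeper levels $L^{[p]}$ and $L^{[q]}$, so that the product scales as $\Oh(\eps^{2(p+q+2)}/h^{2(p+q+1)+1})$; since $\delta=\eps/h\ll 1$ in the singularly perturbed regime, the sum is controlled by the term with the smallest $p+q$, giving the asserted magnitude $\Oh(\eps^{8}/h^{7})\in L^{[3]}$. An analogous induction then fills in every subdiagonal of $Q$ and, by periodicity of the stencil, every subsequent block of $L$.

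The main obstacle I expect is not any one estimate but the combinatorial bookkeeping: one must check that at each step of the algorithm the summands in line~8 really are of strictly different orders, so that ``sum of two entries of different magnitude has the magnitude of the larger one'' applies, and that the levels assigned by the recurrence for $p_{i,j}$ agree in every block with the staircase indices shown in~(\ref{fig:form_L_partition_1})--(\ref{fig:form_L_partition_2}). Once the $P$ and $Q$ blocks are verified, the block-Toeplitz structure of~\eqref{eq:matrix_uniform} makes the extension to the remaining blocks immediate, and the equivalent form $l^{[k]}=\Oh(\delta^{2(k+1)}h)$ follows by substituting $\delta=\eps/h$.
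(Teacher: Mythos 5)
Your proposal follows essentially the same route as the paper's own argument: base cases for the first columns, an induction along the columns of $P$ in which $l(j+1,k)=0$ for $k\le j-1$ collapses the update sum to a single product, a dominant-term argument (using $\eps/h\ll 1$) for the first fill-in $l(m+3,m+1)$ of $Q$, and propagation to the remaining blocks by the repeating stencil structure. The proposal is correct and matches the paper's proof in both structure and the key estimates.
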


\subsection{Distribution of fill-in entries in a floating-point
setting}\label{sec:distribution_fill_in} In practice, Cholesky
factorisation is computed in a floating-point setting.
As discussed in Section \ref{sec:intro}, the time taken to compute
these factorisations increases greatly if there are many subnormal
numbers present. Moreover, even the underflow-zeros in the factors
can  be expensive to compute, since they typically arise from
intermediate calculations involving subnormal numbers. Therefore, in
this section we use the analysis of Section \ref{sec:mag_entries},
to estimate, in terms of $\eps$ and $N$,  the number of entries in
$L$ that are of a given magnitude. From this, one can easily predict
the number of subnormals and underflow-zeros in $L$.
\begin{lemma}\label{lem:NNZ_Cholesky}
Let $A$ be the  $m^2\times m^2$  matrix in \eqref{eq:finite diff}
where the mesh is uniform. Then the number of nonzero entries in the
Cholesky factor $L$ (i.e., $A=LL^T$) \emph{computed using exact
arithmetic} is
\begin{equation}\label{eq:NNZ_Cholesky}
L_{nz}^{}=m^3+m-1.
\end{equation}
\end{lemma}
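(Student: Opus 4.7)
The plan is to count the nonzeros of $L$ directly from the block description given by Theorem~\ref{thm:mag_entries}, equations~(\ref{fig:L_structure})--(\ref{fig:form_L_partition_2}). Because the $\Oh$ notation introduced in Section~\ref{sec:mag_entries} is a two-sided estimate ($C_0|g|\le f\le C_1|g|$), every entry flagged in those diagrams is nonzero in exact arithmetic, and every position left blank is zero. So the nonzero pattern of $L$ is completely determined, and the problem reduces to a block-wise cardinality count.

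I split $L$ into $m$ block rows, each of width $m$, so that there is one diagonal block $M$ in the top-left, $m-1$ further diagonal copies of $Q$, and $m-1$ subdiagonal copies of $P$. From (\ref{fig:L_structure}), $M$ is lower bidiagonal and contributes $m+(m-1)=2m-1$ nonzeros. From (\ref{fig:form_L_partition_2}), $Q$ is a fully populated lower triangle, with $m(m+1)/2$ nonzeros. From (\ref{fig:form_L_partition_1}), row $i$ of $P$ is nonzero in exactly columns $i,i+1,\dots,m$, so $P$ also contributes $m(m+1)/2$ nonzeros.

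Summing the three contributions gives
\[
L_{nz} = (2m-1) + (m-1)\cdot\frac{m(m+1)}{2} + (m-1)\cdot\frac{m(m+1)}{2}
       = (2m-1) + (m-1)m(m+1) = m^3+m-1,
\]
which is the claimed identity~(\ref{eq:NNZ_Cholesky}).

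The only point requiring care, and the main (though minor) obstacle, is the tacit claim that none of the fill-in entries predicted by Theorem~\ref{thm:mag_entries} vanishes through cancellation. This is absorbed into the two-sided $\Oh$ convention adopted in Section~\ref{sec:mag_entries}: since every fill-in is assigned a precise asymptotic magnitude $l^{[k]}=\Oh(\delta^{2(k+1)}h)$ in~(\ref{eq:entries_Cholesky_l_2}), it is genuinely nonzero in exact arithmetic, and the block-wise count above is tight.
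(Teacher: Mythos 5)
Your count is correct and is essentially the paper's own argument: the paper tallies the same fill pattern row-wise (each of the $m(m-1)$ rows below row $m$ carries $m+1$ nonzeros within the band, plus $2m-1$ from the bidiagonal block $M$), while you tally it block-wise via $M$ and the $m-1$ copies each of the full triangles $P$ and $Q$ in \eqref{eq:partition L}, and both reduce to the identity $2m-1+(m-1)m(m+1)=m^3+m-1$. Your closing remark that no fill-in entry vanishes by cancellation (justified by the two-sided $\Oh$ convention) is a point the paper leaves implicit, but it does not change the substance of the proof.
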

\begin{proof}
Since $A$ has  bandwidth $m$, and so too does $L$ (\cite[Prop.
2.3]{Demmel97}). By the Algorithm~\ref{al:Gaxpy_Cholesky_2}, the
fill-in entries only occur from row $(m+1)$. So, from row $(m+1)$,
any row of $L$ has $(m+1)$ nonzero entries and there are $m(m-1)$
such rows, plus $2m-1$ nonzero entries from  top-left block $M$ in
\eqref{fig:L_structure}. Summing these values, we
obtain~\eqref{eq:NNZ_Cholesky}.
\end{proof}
Let $|L^{[k]}|$ be the number of fill-in entries which belong to
$L^{[k]}$. To estimate $|L^{[k]}|$, it is sufficient to evaluate the
fill-in entries in the submatrices $P$ and $Q$ shown in
\eqref{eq:partition L}. \autoref{tab:fill_ins_and_number} describes
the number of fill-in entries associated with their magnitude.
\begin{table}[H]
\centering
\begin{tabular}{c||c|c|||c}
$L^{[k]}$&   $|L^{[k]}|$ in $P $     & $|L^{[k]}|$ in $ Q$   & $|L^{[k]}|$ in $ [P,Q]$ \\
\hline
$L^{[1]}$& $m-1$  & 0 & $m-1$\\
$L^{[2]}$& $m-2$  & 0 &$m-2$\\
$L^{[3]}$& $m-3$  & $m-2$ &$2m-5$\\
\vdots& \vdots &  \vdots   & \vdots\\
$L^{[k]}$& $m-k$  & $m-k+1$ &$2m-2k+1$ \\
\vdots& \vdots &  \vdots  &  \vdots\\
$L^{[m-2]}$& 2  & $3$ &5\\
$L^{[m-1]}$& 1  & $2$ &3\\
$L^{[m]}$& 0 &  1  &1\\
\end{tabular}
\caption{Number of fill-in entries in $P$ and $Q$ associated with
their magnitude.}\label{tab:fill_ins_and_number}
\end{table}
Note that there are $(m-1)$ blocks like $[P,Q]$ in $L$. Then, since
$l^{[k]} \ll l^{[k-1]}$, and the smallest (exact) nonzero entries
belong to $L^{[m]}$ we can use \autoref{tab:fill_ins_and_number} to
determine the number of entries that are at most $\Oh(l^{[p]})$, for
some given $p$ as:
\[ 
\sum\limits_{k=p}^{m}|L^{[k]}|=
\begin{cases} (m-1)(2m-3)+(m-1)(m-2)^2=(m-1)^3 & p=1,\\
(m-1)(m-2)+(m-1)(m-2)^2=(m-2)(m-1)^2 & p=2,\\
(m-1)(m-p+1)^2 & p \ge 3.
\end{cases}
\]
These equations  can be combined and summarised as follows.
\begin{theorem}\label{thm:est_num_fill_ins}
Let $A$ be the matrix of the form \eqref{eq:matrix_uniform}. Then,
the number of fill-in entries associated with their magnitude of the
matrix $L$ satisfies
\begin{equation}\label{eq:entries_estimate}
\sum\limits_{k=p}^{m}|L^{[k]}|\le (m-1)(m-p+1)^2, \quad  p\ge 1.
\end{equation}
\end{theorem}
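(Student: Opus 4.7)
The theorem is essentially a bookkeeping statement that aggregates the magnitudes tabulated in \autoref{tab:fill_ins_and_number} across the $(m-1)$ block rows of $L$ described in \eqref{fig:L_structure}. My plan is therefore to first justify the counts in that table directly from the block structures \eqref{fig:form_L_partition_1} and \eqref{fig:form_L_partition_2} of $P$ and $Q$, then sum these counts from $k=p$ to $k=m$, and finally verify that the resulting three cases are each bounded by $(m-1)(m-p+1)^2$.

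First I would read off the counts from the two displays. In $P$, the entries labelled $[k]$ for $1\le k\le m-1$ lie on the $k$-th superdiagonal of the strictly upper-triangular portion (above the $\Oh(\eps^2/h)$ diagonal), so they contribute exactly $m-k$ entries. In $Q$, the entries labelled $[k]$ (for $k\ge 3$) fill a sub-triangular band with one more entry per diagonal than in $P$, giving $m-k+1$ for $k\ge 3$, and $0$ for $k\in\{1,2\}$; the entry $[m]$ is the unique bottom-left corner. Adding the two contributions verifies the last column of \autoref{tab:fill_ins_and_number}: $|L^{[k]}|$ in a single $[P,Q]$ block equals $m-1$, $m-2$, $2m-5$ for $k=1,2,3$, and $2m-2k+1$ for $k\ge 3$. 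Multiplying by the number $(m-1)$ of such blocks in \eqref{fig:L_structure} yields the per-magnitude totals in $L$.

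Next I would carry out the tail sum $\sum_{k=p}^{m}|L^{[k]}|$. The core computation is
\begin{equation*}
\sum_{k=p}^{m}(2m-2k+1)=\sum_{j=0}^{m-p}(2j+1)=(m-p+1)^{2}, \quad p\ge 3,
\end{equation*}
which, after multiplying by $(m-1)$, produces the claimed bound with \emph{equality}. For $p=2$ one adds the extra term $m-2$ from $k=2$, obtaining $(m-2)+(m-2)^{2}=(m-1)(m-2)$ per block, hence $(m-1)^{2}(m-2)$ in total. For $p=1$ one adds both $m-1$ and $m-2$ to the $p=3$ tail $(m-2)^{2}$; the identity $(m-2)^{2}+2m-3=(m-1)^{2}$ then gives $(m-1)^{3}$ in total.

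The only remaining step is to confirm that in each of these cases the value is at most $(m-1)(m-p+1)^{2}$. For $p=1$ the bound equals $(m-1)m^{2}$, which manifestly dominates $(m-1)^{3}$. For $p=2$ the bound equals $(m-1)^{3}$, which dominates $(m-2)(m-1)^{2}$. For $p\ge 3$ the bound is attained. I expect the only mildly delicate point to be the verification of the table entries, in particular distinguishing the anti-diagonal count in $P$ (which omits the $[0]$ diagonal) from the slightly larger count in $Q$ (which starts one row lower because the first two columns of $Q$ contain no $[1]$ or $[2]$ entry); the algebraic manipulation for the case $p=1$ is routine but must be done carefully to match the stated closed form.
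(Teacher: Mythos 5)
Your proposal is correct and follows essentially the same route as the paper: it reads off the per-magnitude counts $m-k$ in $P$ and $m-k+1$ (for $k\ge 3$) in $Q$ exactly as in the paper's table, multiplies by the $(m-1)$ blocks $[P,Q]$, and evaluates the tail sums case by case ($p=1,2$, and $p\ge 3$) to obtain $(m-1)^3$, $(m-2)(m-1)^2$, and $(m-1)(m-p+1)^2$, matching the paper's displayed computation. The only difference is cosmetic: you justify the table entries from the block structure and spell out the final comparison with the bound $(m-1)(m-p+1)^2$, steps the paper leaves implicit.
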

Combining Theorems \ref{thm:mag_entries} and
\ref{thm:est_num_fill_ins} enables us to accurately predict the
total number, and location,  of subnormal and underflow-zero entries
in  $L$, for given $N$ and $\eps$. For example, recall
\autoref{fig:diag_L} where we took $\eps=10^{-6}$ and $N=128$. To
determine, using \autoref{thm:mag_entries}, the diagonals where
entries are subnormal, we solve
\begin{equation}\label{eq:sub_eq}
(\eps N)^{2(k+1)} =  2^{-1022}N,
\end{equation}
for $N=128$ and $\eps=10^{-6}$, which yields $k\approx 38$. It
clearly agrees with the observation in \autoref{fig:diag_L}; i.e.,
the maximal value of the entries on diagonals 38 and $N-38=90$ are
less than \texttt{realmin}.
Similarly, all entries on diagonals between 40 and 88 are flushed to
zero.

As a further example, letting $N=512$ and $\eps=10^{-6}$,
by~\eqref{eq:entries_estimate}, the total number of underflow-zero
and subnormal entries in $L$ are, respectively,
\[
\quad \sum\limits_{k=48}^{511}|L^{[k]}|=109,800,960, ~  \text{ and }
~
\sum\limits_{k=46}^{47}|L^{[k]}|=\sum\limits_{k=46}^{511}|L^{[k]}|-\sum\limits_{k=48}^{511}|L^{[k]}|
=948,600.
\]
This is exactly what is observed in~Table~\ref{tab:CholUniform512}.
Moreover, the total number of entries with magnitude less than
\texttt{realmin} is 110,749,560 which is over 80\% of the exact
nonzero entries (cf. Lemma~\ref{lem:NNZ_Cholesky}) in $L$:
133,433,341. Such a predictable appearance of subnormals and
underflows is important in the sense of choosing suitable linear
solvers, i.e., direct or iterative ones.

More generally, we  can  use \eqref{eq:sub_eq} to investigate ranges
of $N$ and $\eps$ for which subnormal entries occur (assuming $\eps
\leq N^{-1}$). Since  the largest possible value of $k$ is $m$, a
Cholesky factor will have subnormal entries if $\eps$ and $N$ are
such that $(\eps N)^{2N}\leq 2^{-1022}N$. Rearranging, this gives
that
\begin{equation}\label{eq:g(N)}
\eps\leq
\frac{1}{N}\left(2^{-1022}N\right)^{1/(2N)}=2^{-511/N}N^{(1/(2N)-1)}=:g(N).
\end{equation}
The function $g$ defined in \eqref{eq:g(N)} is informative because
it gives the largest  value of  $\eps$ for a discretisation with
given  $N$ leads to a Cholesky factor with entries less than
$2^{-1022}$. For example, \autoref{fig:g(N)} (on the left) shows
$g(N)$ for $N\in [200,500]$. It demonstrates that, for  $\eps\le
1.05\times10^{-3}$ (determined numerically), subnormal entries are
to be expected for some values of
 $N$ (cf. \autoref{tab:CholUniform512}).
The line $\eps=10^{-3}$ intersects $g$ at approximately $N=263$ and
$N=484$, meaning that
 a discretisation with $263 \leq N \leq 484$  yields
 entries with the magnitude less than $2^{-1022}$ in $L$
 for $\eps=10^{-3}$.  On the right of \autoref{fig:g(N)} we
 show that, for large $N$,  $g(N)$ decays like $N^{-1}$. Since we are
 interested in the regime where
  $\eps \leq N^{-1}$, this shows that, for small $\eps$, subnormals are to
 be expected for all but the smallest values of $N$.

\begin{figure}[htb]
\begin{center}
\includegraphics[width=4.75cm]{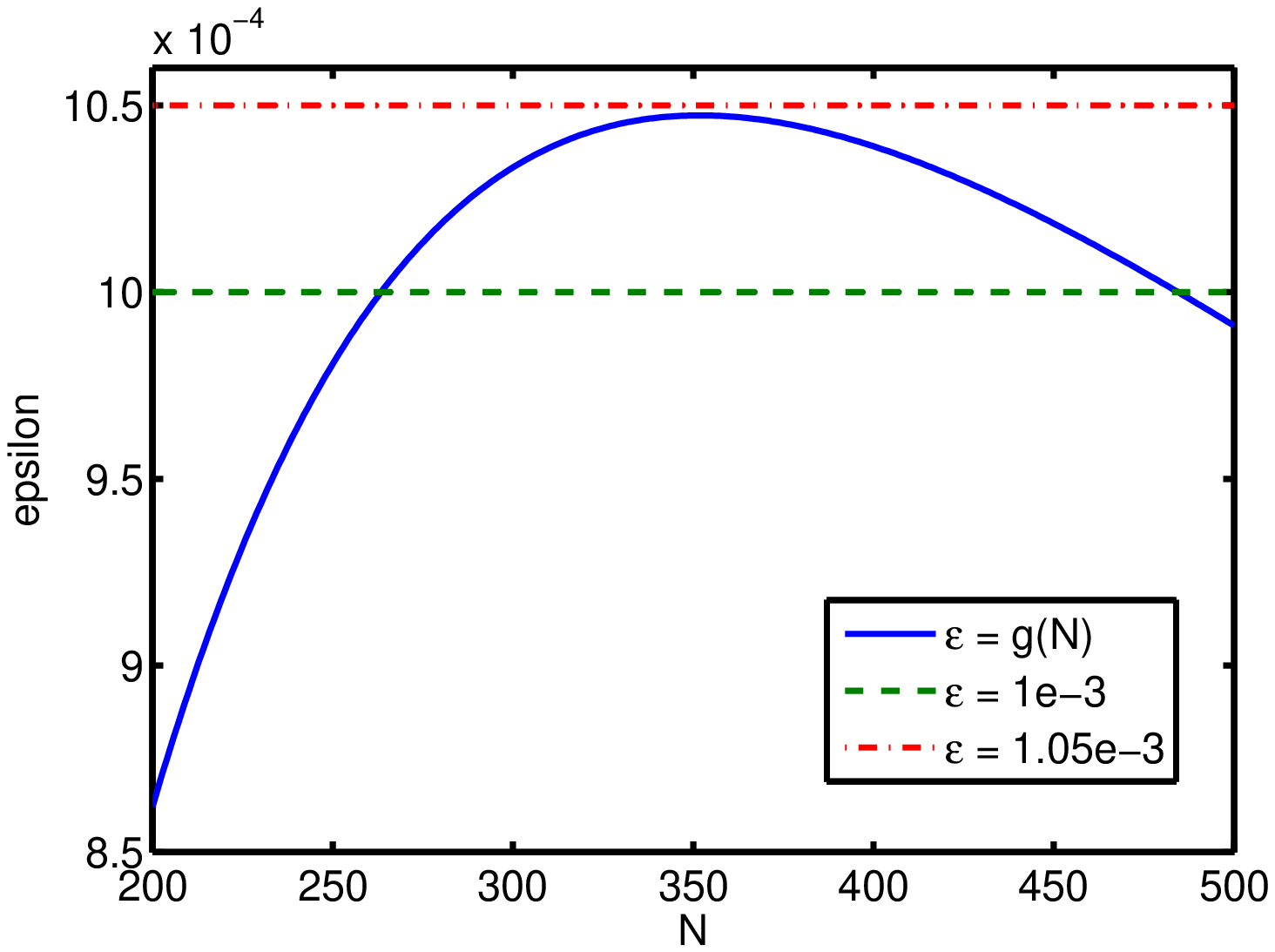}
\hspace{.5cm}
\includegraphics[width=4.75cm]{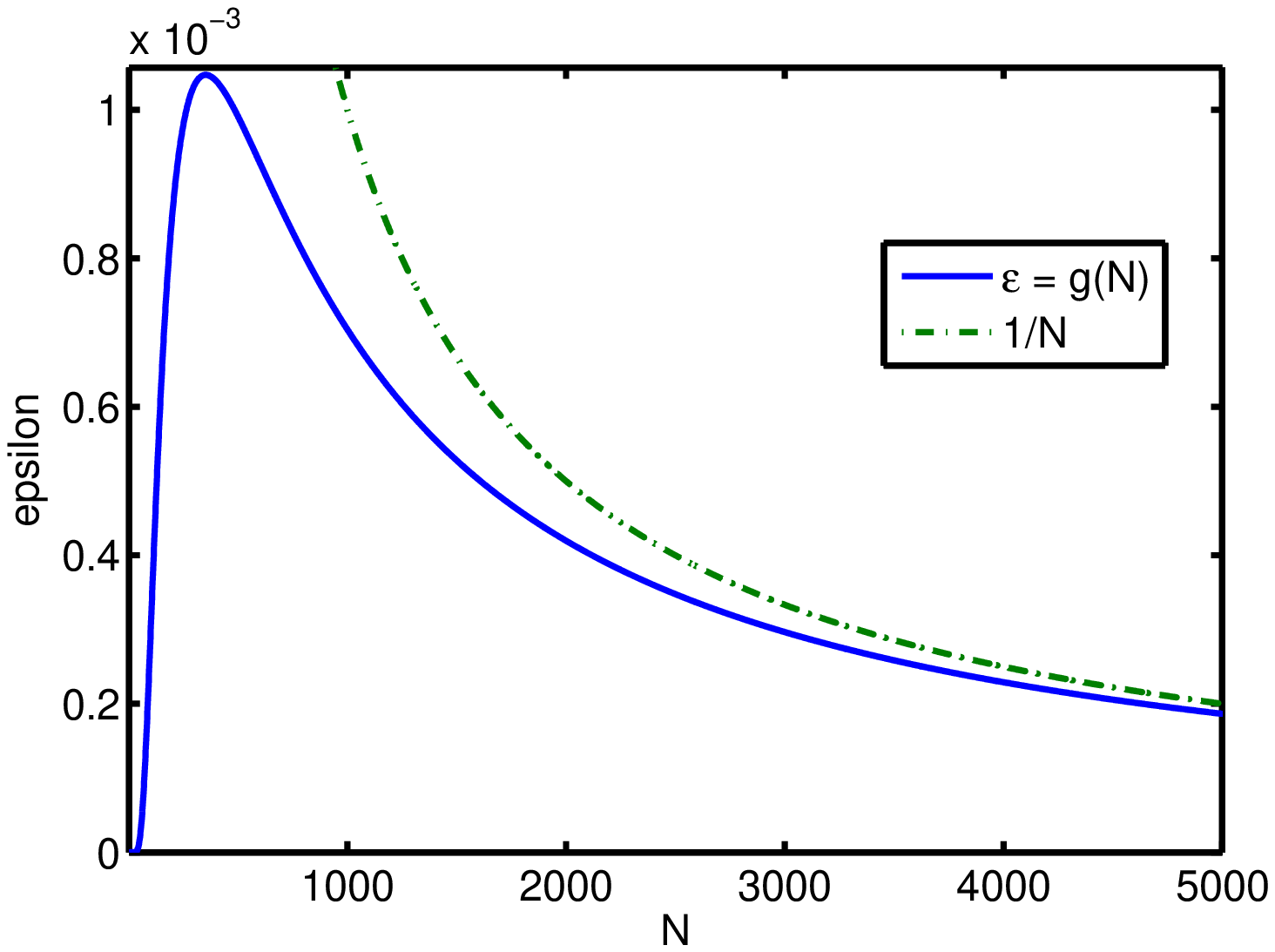}
\caption{The  function $g(N)$ defined in \eqref{eq:g(N)} with $N\in
[200,500]$ (left) and $N\in[1,5000]$ (right).} \label{fig:g(N)}
\end{center}
\end{figure}


\subsection{Boundary layer-adapted meshes}\label{sec:Cholesky_Shishkin}
Our analysis so far has been for computations on uniform meshes.
However, a scheme such as (\ref{eq:finite diff}) for (\ref{eq:2DRD})
is usually applied on a layer-adapted mesh, such as a Shishkin mesh.
For these  meshes, in the neighbourhood of the boundaries, and
especially near corner layers, the local mesh width is $\Oh(\eps
N^{-1})$ in each direction, and so the entries of the system matrix
are of the same order, and no issue with subnormal numbers is likely
to arise. However, away from layers, these fitted meshes are usually
 uniform, with a local mesh width of $\Oh(N^{-1})$, and so the
 analysis outlined above applies directly.
Since roughly one quarter (depending on mesh construction) of all
mesh points are located in this region, the influence on the
computation is likely to be substantial.

The main complication in extending our analysis to, say, a Shishkin
mesh, is in  considering the ``edge layers'', where the mesh width
may be
 $\Oh(\eps N^{-1})$ in one coordinate direction, and  $\Oh(N^{-1})$ in another. Although we have
 not analysed this situation carefully, in practise it seems that the
 factorisation behaves more like a uniform mesh.
This is demonstrated in \autoref{tab:CholShish512} below. Comparing
with \autoref{tab:CholUniform512}, we see, for small $\eps$, the
number of entries flushed to zero is roughly three-quarters that of
the uniform mesh case.
\begin{table}[htb]
\centering
\begin{tabular}{c||r|r|r|r|r|r}
\multicolumn{1}{c|}{$\eps$} &
  \multicolumn{1}{|c|}{$10^{-1}$} &
  \multicolumn{1}{|c|}{$10^{-2}$} &
  \multicolumn{1}{|c|}{$10^{-3}$} &
  \multicolumn{1}{|c|}{$10^{-4}$} &
  \multicolumn{1}{|c|}{$10^{-5}$} &
  \multicolumn{1}{|c}{$10^{-6}$} \\\hline
Time (s) &      52.580 &       58.213 &      447.533 &      179.540&      101.507 &       73.250\\
Nonzeros in $L$   &  133,433,341 &  133,240,632 &  127,533,193 &   78,091,189 &   62,082,599 &   54,497,790\\
Subnormals in $L$ &          0 &      28,282 &    2,648,308 &    1,669,345 &    1,079,992 &     814,291 \\
Underflow zeros &         0 &     192,709 &    5,900,148 &   55,342,152 &   71,350,742 &   78,935,551\\
\end{tabular}
\caption{Time taken (in seconds) to compute the Cholesky factor,
$L$,
  of $A$ in (\ref{eq:finite diff}) on a Shishkin mesh with
  $N=512$. The number of nonzeros, subnormals, and underflow-zeros in $L$ are~also~shown.}\label{tab:CholShish512}
\end{table}

\section{Conclusions}\label{sec:conclusion}
The paper addresses, in a comprehensive way, issues raised in
\cite{MaMa13} by showing how to predict the number and location of
subnormal and underflow entries in the Cholesky factors of $A$
in~\eqref{eq:lin_sys_2DRD} for given $\eps$ and $N$.

Further developments on this work are possible. In particular, the
analysis shows that, away from the existing diagonals, the magnitude
of fill-in entries decay exponentially, as seen
in~\eqref{eq:entries_Cholesky_l_2}, a fact that could be  exploited
in the  design of preconditioners of iterative solvers. For example,
as shown in \autoref{lem:NNZ_Cholesky}, the
 Cholesky factor of $A$, in exact arithmetic, has
$\mathcal{O}(N^3)$ nonzero entries. However,
Theorem~\ref{thm:est_num_fill_ins} shows that, in practice (i.e., in
a float-point setting), there are only $\mathcal{O}(N^2)$ entries in
$L$ when $\eps$ is small and $N$ is large. This suggests that, for a
singularly perturbed problem, an incomplete Cholesky factorisation
may be a very good approximation for $L$. This is a topic of ongoing
work.

In this  paper we have restricted our  study to Cholesky
factorisation of the coefficient matrix arising from
finite-difference discretisation of the model problem
(\ref{eq:2DRD}) on a uniform and a boundary layer-adapted mesh, the
same phenomenon is also observed in more general context of
singularly perturbed problems. That includes the $LU$-factorisations
of the coefficient matrices coming from both finite difference and
finite element methods applied to reaction-diffusion and
convection-diffusion problems, though further investigation is
required to establish the details.



\begin{thebibliography}{99.}%
%
\bibitem{YChen_etal_2008a}
Y. Chen, T.A. Davis, W.W. Hager, and S. Rajamanickam, {\em Algorithm
887: {CHOLMOD}, supernodal sparse cholesky
  factorization and update/downdate}, ACM Trans. Math. Softw., 35 (2008),
  pp.~22:1--22:14.

\bibitem{ClGr05a} C. Clavero, J.L. Gracia, and E. O'Riordan. A parameter robust
numerical method for a two dimensional reaction-diffusion problem.
Math. Comput., 74(252):1743--1758, 2005.

\bibitem{Demmel97} James W. Demmel. Applied numerical linear
  algebra. Society for Industrial and Applied Mathematics (SIAM),
  Philadelphia, PA (1997).

\bibitem{Golub_96} Gene H. Golub and Charles F. Van Loan. Matrix
  computations. Johns Hopkins Studies in the Mathematical
  Sciences. Johns Hopkins University Press, Baltimore, MD, third
  edition~(1996).



\bibitem{Linss10} Torsten Lin\ss. Layer-adapted meshes for
reaction-convection-diffusion problems, volume 1985 of Lecture Notes
in Mathematics. Springer-Verlag, Berlin, 2010.

\bibitem{MaMa13} S. MacLachlan and N. Madden, \emph{Robust solution of
    singularly perturbed problems
using multigrid methods}, SIAM J. Sci. Comput., 35 (2013), pp.
A2225-A2254.


\bibitem{Overton01} Michael L. Overton. Numerical computing with IEEE
  floating point arithmetic. Society for Industrial and Applied
  Mathematics (SIAM), Philadelphia, PA, 2001.

\bibitem{Saad_03} Saad, Y. \textit{Iterative methods for sparse linear systems},
Society for Industrial and Applied Mathematics, Philadelphia, PA,
second edition (2003).


\end{thebibliography}
\end{document}